\newcommand{\1}{\mathbbm {1}}
\newcommand{\Z}{{\mathbb Z}}
\newcommand{\C}{{\mathbb C}}
\newcommand{\h}{{\mathfrak h}}
\newcommand{\wh}{{\widehat{\mathfrak h}}}
\newcommand{\I}{{\mathcal I}}
\newcommand{\tI}{\widetilde{\I}}
\newcommand{\J}{{\mathcal J}}
\def\<{\langle}
\def\>{\rangle}
\def\a{\alpha}
\def\b{\beta}
\def\g{\mathfrak g}
\def\h{\mathfrak h}
\newcommand{\mraff}{\mathrm{aff}}
\newcommand{\la}{\langle}
\newcommand{\ra}{\rangle}
\newtheorem{thm}{Theorem}[section]
\newtheorem{prop}[thm]{Proposition}
\newtheorem{lem}[thm]{Lemma}
\newtheorem{rmk}[thm]{Remark}
\begin{document}

\begin{center}
{\Large \bf  The structure of parafermion vertex operator algebras $K(osp(1|2n),k)$}

\end{center}

\begin{center}
{ Cuipo Jiang$^{a}$\footnote{Supported by China NSF grant No.11771281.}
and Qing Wang$^{b}$\footnote{Supported by
China NSF grants No.12071385 and the Fundamental Research Funds for the Central Universities No.20720200067.}\\
$\mbox{}^{a}$ School of Mathematical Sciences, Shanghai Jiao Tong University, Shanghai 200240, China\\
\vspace{.1cm}
$\mbox{}^{b}$ School of Mathematical Sciences, Xiamen University,
Xiamen 361005, China\\
}
\end{center}

%\subjclass[2000]{17B69, 17B65}

\begin{abstract}
In this paper, the structure of the parafermion vertex operator algebra associated to an
integrable highest weight module for simple affine Lie superalgebra $osp(1|2n)$
is studied. Particularly, we determine the generators for this algebra.
\end{abstract}

%\maketitle

\section{Introduction}
\def\theequation{1.\arabic{equation}}
\setcounter{equation}{0}
Let $\g$ be a simple Lie superalgebra and let $L_{\hat{\g}}(k,0)$ be the simple affine vertex superalgebra associated to the affine Lie superalgebra $\hat{\g}$ with the level $k$. If $\g$ is a Lie algebra, $L_{\hat{\g}}(k,0)$ is a $C_2$-cofinite and rational vertex operator algebra if and only if $k$ is a positive integer \cite{FZ}, \cite{DL}, \cite{Li}. If $\g$ is not a Lie algebra, Gorelik and Kac \cite{GK} claimed that $L_{\hat{\g}}(k,0)$ is $C_2$-cofinite if and only if $\g$ is the simple Lie superalgebra $osp(1|2n)$ and $k$ is a positive integer, which was proved recently in \cite{AL} and \cite{CL}. Also in \cite{CL}, Creutzig and Linshaw proved the rationality of the affine vertex operator superalgebra $L_{\hat{\g}}(k,0)$ with $k$ being a positive integer. The structural and representation theory of the rational parafermion vertex operator algebras associated to the integrable highest weight modules of affine Kac-Moody Lie algebras and their orbifolds have been fully studied (see \cite{ADJR,ALY1,ALY2,DLY2,DLWY,DR,DW1,DW2,DW3,JW1,JW2,Lam,Wang} etc.) In this paper, we turn our attention to the rational parafermion vertex operator algebras associated to the affine vertex superalgebras. We study the structure of the rational parafermion vertex operator algebra $K(\g,k)$ associated to the simple Lie superalgebra $\g=osp(1|2n)$ with $k$ a positive integer. Specifically, we determine the generators of the parafermion vertex operator algebras $K(osp(1|2n),k)$. The generator result shows that the parafermion vertex operator algebra $K(osp(1|2),k)$ associated to $osp(1|2)$ together with $K(sl_2,2k)$ associated to $sl_2$ are building blocks of $K(osp(1|2n),k)$. The structural and
representation theories for $K(sl_2,k)$ are studied in \cite{DLWY}, \cite{DW2}, \cite{ALY}, \cite{JW1}, \cite{JW2} etc. And the
representation theory for $K(osp(1|2),k)$ are studied in \cite{CFK}. These may shed light on the study of
representation theory for $K(osp(1|2n),k)$.

  Let $\g=osp(1|2n)$ and  $\h$ be its Cartan subalgebra, it is known that $L_{\hat{\g}}(k,0)$ is the simple quotient of the universal vacuum module $V_{\hat{\g}}(k,0)$. The maximal submodule of $V_{\hat{\g}}(k,0)$ is generated by $e_{\theta}(-1)^{k+1}\1$ \cite{GS}, where $\theta$ is the highest root of $\g$. As for the parafermion vertex operator algebra, let $M_{\hat{\h}}(k,0)$ be the Heisenberg vertex subalgebra of $V_{\hat{\g}}(k,0)$,  and $K(\g,k)$  the simple quotient of the commutant vertex operator algebra $N(\g,k)=$Com$(M_{\hat{\h}}(k,0),V_{\hat{\g}}(k,0))$. We determine the generators of the maximal ideal of $N(\g,k)$ and further characterize the structure of the parafermion vertex operator algebra $K(\g,k)$.

  The paper is organized as follows. In Section 2, we recall the construction of the vertex operator superalgebras $V(k,0)$ associated to the simple Lie superalgebra $osp(1|2n)$. Let $V(k,0)(0)=\{v\in V(k,0)|h(0)v=0, \forall h\in \h\}$ be the subalgebra of $V(k,0)$, where $\h$ is the Cartan subalgebra of $osp(1|2n)$. Since
  $V(k,0)(0)=M_{\hat{\h}}(k,0)\otimes N(osp(1|2n),k)$, we first give the generators of the vertex operator algebra $V(k,0)(0)$ in this section. Then we determine the generators of the commutant vertex operator algebra $N(osp(1|2n),k)$ and prove that $N(osp(1|2),k)$ together with $N(sl_2,2k)$ are the building block of $N(osp(1|2n),k)$ in Section 3. In Section 4, we give a set of generators for the parafermion vertex operator algebra $K(osp(1|2n),k)$, which is the simple quotient of $N(osp(1|2n),k)$. We also give the generator of the maximal ideal of $N(osp(1|2n),k)$.

\section{Vertex operator superalgebras $V(k,0)$ and vertex operator subalgebras $V(k,0)(0)$}
\label{Sect:V(k,0)}
\def\theequation{2.\arabic{equation}}
\setcounter{equation}{0}

Let $\g$ be the finite dimensional simple Lie superalgebra $osp(1|2n)$ with a Cartan
subalgebra $\h.$ Let $\Delta$ be the corresponding root system, $\Delta_{0}$ the root system of even and $\Delta_{1}$ the root system of odd, $\Delta_{0}^{L}$ the set of long roots in $\Delta_{0}$ and $\Delta_{0}^{S}$ the set of short roots in $\Delta_{0}$, and
$Q$ the root lattice. Let  $\la ,\ra$ be an invariant even supersymmetric
nondegenerate bilinear form on $\g$ such that $\<\a,\a\>=2$ if
$\alpha$ is a long root in $\Delta_{0}$, where we have identified $\h$ with $\h^*$
via $\<,\>.$ As in \cite{H}, we denote the image of $\alpha\in
\h^*$ in $\h$ by $t_\alpha.$ That is, $\alpha(h)=\<t_\alpha,h\>$
for any $h\in\h.$ Fix simple roots $\{\alpha_1,\cdots,\alpha_n\}$
and denote the highest root by $\theta.$

Let $\g_{\alpha}$ denote the root space associated to the root
$\a\in \Delta.$ For $\alpha\in \Delta_{0(+)}^{S}$, we fix $e_{\pm
\alpha}\in \g_{\pm \alpha}$ and
$h_{\alpha}=2t_\alpha\in \h$ such that
 $[e_\a,e_{-\a}]=h_{\a}, [h_\a,e_{\pm \a}]=\pm 2e_{\pm\a}.$ That
is, $\g^{\a}=\C e_{\a}+\C h_{\alpha}+\C e_{-\alpha}$ is isomorphic
to $sl_2$. For $\alpha\in \Delta_{0(+)}^{L}$, we fix $e_{\pm
\alpha}\in \g_{\pm \alpha}$, $x_{\pm\frac{1}{2} \alpha}\in \g_{\pm \frac{1}{2}\alpha}$, $h_{\alpha}=t_\alpha\in \h$ such that
$$[e_{\alpha}, e_{-\alpha}]=h_{\alpha}, [h_{\alpha},e_{\pm\alpha}]=\pm 2e_{\pm\alpha}, [h_{\alpha}, x_{\pm \frac{1}{2}\alpha}]=\pm x_{\pm\frac{1}{2}\alpha},$$

$$ [e_{\alpha},x_{\frac{1}{2}\alpha}]=0, [e_{-\alpha},x_{\frac{1}{2}\alpha}]=-x_{-\frac{1}{2}\alpha}, [e_{\alpha},x_{-\frac{1}{2}\alpha}]=-x_{\frac{1}{2}\alpha}, [e_{-\alpha},x_{-\frac{1}{2}\alpha}]=0,$$

$$\{x_{\frac{1}{2}\alpha}, x_{\frac{1}{2}\alpha}\}=2e_{\alpha}, \{x_{\frac{1}{2}\alpha},x_{-\frac{1}{2}\alpha}\}=h_{\alpha}, \{x_{-\frac{1}{2}\alpha},x_{-\frac{1}{2}\alpha}\}=-2e_{-\alpha},$$
where we notice that $\pm \frac{1}{2}\alpha\in \Delta_{1}$. That is, $\bar{\g}^{\alpha}=\C e_{\a}+\C h_{\alpha}+\C e_{-\alpha}+\C x_{\a}+\C x_{-\a}$
 is isomorphic
to $osp(1|2)$.
 Then
$\<h_\a,h_\a\>=\frac{4}{\<\alpha,\alpha\>}$ and
$\<e_{\a},e_{-\a}\>=\frac{2}{\<\alpha,\alpha\>}$
for all
$\alpha\in \Delta_{0},$ and $\<\a,\a\>=2$ and
$\<x_{\frac{1}{2}\a},x_{-\frac{1}{2}\a}\>=-\<x_{-\frac{1}{2}\a},x_{\frac{1}{2}\a}\>=2$
for
$\alpha\in \Delta_{0}^{L}.$ For more informations on the root decompositions and root systems of the Lie superalgebra $\g$ can refer to \cite{Kac}.

Let $\widehat{\mathfrak g}= \g \otimes \C[t,t^{-1}] \oplus \C K$
be the corresponding affine Lie superalgebra. Let $k$ be a positive
integer and
\begin{equation*}
V(k,0) = V_{\widehat{\g}}(k,0) = Ind_{\g \otimes
\C[t]\oplus \C K}^{\widehat{\g}}\C
\end{equation*}
the induced $\widehat{\g}$-module such that ${\g} \otimes \C[t]$ acts as $0$ and $K$ acts as $k$ on $\1=1$.

We denote by $a(n)$ the operator on $V(k,0)$ corresponding to the action of
$a \otimes t^n$. Then
%\begin{equation}\label{eq:affine-commutation}
$$[a(m), b(n)] = [a,b](m+n) + m \la a,b \ra \delta_{m+n,0}k$$
%\end{equation}
for $a, b \in \g$ and $m,n\in \Z$.

Let $a(z) = \sum_{n \in \Z} a(n)z^{-n-1}$. Then $V(k,0)$ is a
vertex operator superalgebra generated by $a(-1)\1$ for $a\in \g$ such
that $Y(a(-1)\1,z) = a(z)$ with the
vacuum vector $\1$ and the Virasoro vector
\begin{align*}
\omega_{\mraff} &= \frac{1}{2(k+n+\frac{1}{2})} \Big(
\sum_{i=1}^{n}h_i(-1)h_i(-1)\1 +\sum_{ \alpha\in\Delta_{0}}
\frac{\<\a,\a\>}{2}e_{\alpha}(-1)e_{-\alpha}(-1)\1
\\&-\sum_{ \alpha\in\Delta_{1(+)}}
\frac{1}{2}x_{\alpha}(-1)x_{-\alpha}(-1)\1+\sum_{ \alpha\in\Delta_{1(+)}}
\frac{1}{2}x_{-\alpha}(-1)x_{\alpha}(-1)\1
\Big)
\end{align*}
of central charge $\frac{kn(2n-1)}{k+n+\frac{1}{2}}$ (e.g.
\cite{KRW}), where $h^{\vee}$ is the dual
Coxeter number of $\g$, $\{h_i|i=1,\cdots,n\}$ is an
orthonormal basis of $\mathfrak h,$ $\Delta_{1(+)}$ is the set of the positive odd roots.

We will use the standard notation for the component operators of
$Y(u,z)$ for $u\in V(k,0).$ That is, $Y(u,z)=\sum_{n\in \Z}u_nz^{-n-1}.$ From the
definition of vertex operators, we see that
$(a(-1)\1)_n=a(n)$ for $a\in \g.$ So in the rest of this paper, we will use
both $a(n)$ and $(a(-1)\1)_n$ for $a\in \g$ and use $u_n$ only for general $u$
without further explanation.

For $\lambda \in {\mathfrak h}^*$, set
\begin{equation*}
V(k,0)(\lambda)=\{v\in V(k,0)|h(0)v=\lambda(h) v, \forall\;
h\in\mathfrak h\}.
\end{equation*} Then we have
\begin{equation}\label{eq:V-dec}
V(k,0)=\oplus_{\lambda\in Q}V(k,0)(\lambda).
\end{equation}

Since $[h(0), Y(u,z)]=Y(h(0)u,z)$ for $h\in \mathfrak h$ and  $u
\in V(k,0)$, from the definition of affine vertex operator
superalgebra, we see that $V(k,0)(0)$ is a vertex operator subalgebra
of $V(k,0)$ with the same Virasoro vector $\omega_{\mraff}$ and
each $V(k,0)(\lambda)$ is a module for $V(k,0)(0)$.

The first theorem is on generators for $V(k,0)(0).$

\begin{thm}\label{generator1} The vertex operator algebra
$V(k,0)(0)$ is generated by vectors $\alpha_{i}(-1)\1$ and $e_{-\alpha}(-2)e_{\alpha}(-1)\1$, $x_{-\frac{1}{2}\alpha}(-2)x_{\frac{1}{2}\alpha}(-1)\1$, $e_{-\beta}(-2)e_{\beta}(-1)\1$ for
$1\leq i \leq n, \alpha\in\Delta_{0(+)}^{L}, \beta\in\Delta_{0(+)}^{S}$, where $\Delta_{0(+)}^{L}$ and $\Delta_{0(+)}^{S}$ are the sets of even positive long roots and even positive short roots respectively.
\end{thm}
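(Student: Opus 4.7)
The plan is to let $U$ denote the vertex operator subalgebra of $V(k,0)$ generated by the listed vectors, observe that $U\subseteq V(k,0)(0)$ (since every generator has zero $\h$-weight), and prove the reverse inclusion by a PBW induction. Concretely, I would use the PBW theorem for $\widehat{\g}$ to write each element of $V(k,0)$ as a linear combination of ordered monomials
$$M \;=\; h^{(1)}(-p_1)\cdots h^{(r)}(-p_r)\,e_{\gamma_1}(-m_1)\cdots e_{\gamma_s}(-m_s)\,\1,$$
with $h^{(i)}$ from a fixed basis of $\h$, the $\gamma_j\in\Delta$ (even or odd) taken in a fixed total order, and $p_i,m_j\ge 1$, so that $M\in V(k,0)(0)$ iff $\sum_j\gamma_j=0$. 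The inclusion $M\in U$ would then be proved by induction on the lexicographic pair $(s,N)$, where $N=\sum p_i+\sum m_j$ is the conformal weight.

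First I would treat $s=0$: since $\{\alpha_1,\ldots,\alpha_n\}$ is a basis of $\h^*$, the nondegenerate form yields a basis $\{h_i\}\subset\h$ with each $h_i(-1)\1\in U$; iterating $(h_i(-1)\1)_{-n}\1=h_i(-n)\1$ and taking normal-ordered products places every Heisenberg monomial in $U$. Next I would handle $s=2$, where the zero-weight condition forces $\gamma_2=-\gamma_1=:-\gamma$. Writing $W^\gamma:=e_{-\gamma}(-2)e_\gamma(-1)\1\in U$, a direct commutator computation yields, for $i\ge 1$,
$$h_\gamma(i)\,W^\gamma \;=\; -2\,e_{-\gamma}(i-2)e_\gamma(-1)\1 \,+\, 2\,e_{-\gamma}(-2)e_\gamma(i-1)\1,$$
which at $i=1$ already places $e_{-\gamma}(-1)e_\gamma(-1)\1$ into $U$ (using $e_\gamma(0)\1=0$). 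Combining such Cartan-mode actions $(h_\gamma(-1)\1)_i$ with $L(-1)$-derivatives $v_{-n}\1=\tfrac{1}{(n-1)!}L(-1)^{n-1}v$ and vertex products $(W^{\gamma})_n(W^{\gamma'})$, I would inductively extract each $e_{-\gamma}(-m)e_\gamma(-n)\1$ by solving a triangular linear system at each conformal weight, modulo strictly lower-complexity monomials. The analogous computation with $x_{\pm\alpha/2}$ in place of $e_{\pm\alpha}$ would handle the odd-root quadratic vectors.

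For $s\ge 3$, the weights $\gamma_1,\ldots,\gamma_s$ sum to zero but need not pair off. I would then apply the super-bracket
$$[e_\alpha(m),e_\beta(n)]_{\pm} \;=\; N_{\alpha,\beta}\,e_{\alpha+\beta}(m+n) \,+\, m\,\langle e_\alpha,e_\beta\rangle\,\delta_{m+n,0}\,k$$
(with $\pm$ reflecting $\Z_2$-parity) to rewrite adjacent pairs of root vectors: each transposition produces either a Heisenberg summand (decreasing $s$ by two) or a single root vector $e_{\alpha+\beta}$ in place of two (decreasing $s$ by one), while the remaining factors have strictly smaller $(s,N)$-complexity, so the induction hypothesis closes the step.

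The main obstacle will be the $s=2$ step: one has only a single quadratic generator $W^\gamma$ per root $\gamma$, yet must extract the entire two-parameter family $\{e_{-\gamma}(-m)e_\gamma(-n)\1\}_{m,n\ge 1}$. The extraction reduces to a triangular argument in the modes $(h_\gamma(-1)\1)_i(L(-1)^j W^\gamma)$; the bookkeeping grows with $m+n$ but remains elementary and parallels the calculations of Dong-Lam-Yamada-Wang for $sl_2$ and of Chen-Feng-Kong for $osp(1|2)$. The odd-root case will further require the $osp(1|2)$ relations $\{x_{\alpha/2},x_{\alpha/2}\}=2e_\alpha$, $\{x_{\alpha/2},x_{-\alpha/2}\}=h_\alpha$ to coordinate $x_{-\alpha/2}(-2)x_{\alpha/2}(-1)\1$ with $e_{-\alpha}(-2)e_\alpha(-1)\1$, but no new conceptual difficulty arises; once the quadratic step is established, the $s\ge 3$ reduction proceeds cleanly by commutator gymnastics.
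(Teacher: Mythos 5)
Your setup (define $U$, show $V(k,0)(0)\subseteq U$ by induction on the number $s$ of root-vector factors) matches the paper, and your $s=2$ step is essentially a re-derivation of \cite[Theorem 2.1]{DLWY}, which the paper simply quotes; that part, though only sketched, is sound. The genuine gap is in your $s\ge 3$ step. The identity $e_{\gamma_1}(m)e_{\gamma_2}(n)=\pm e_{\gamma_2}(n)e_{\gamma_1}(m)+[e_{\gamma_1},e_{\gamma_2}]_{\pm}(m+n)$ does \emph{not} replace a length-$s$ monomial by a length-$(s-1)$ one: only the bracket summand is shorter, while the transposed product term still has $s$ root-vector factors. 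So ``rewriting adjacent pairs'' only tells you that $M\in U$ if and only if the reordered monomial $M'\in U$, which is circular and never produces a single length-$s$ monomial known to lie in $U$. Your induction on $(s,N)$ therefore does not close.

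What is actually needed, and what the paper does, is the reverse manoeuvre. First one upgrades the $s=2$ statement to an \emph{operator} statement: $e_{-\alpha}(m)e_{\alpha}(n)U\subseteq U$ for \emph{all} $m,n\in\Z$, proved via the iterate formula \cite[Proposition 4.5.7]{LL}, which expresses $e_{-\alpha}(m)e_{\alpha}(n)$ as a finite sum of modes of the vectors $e_{-\alpha}(j)e_{\alpha}(-1)\1\in U$. You never formulate this, and it is the engine of the whole induction. Then, for $s\ge 3$: if some $\beta_i+\beta_j\in\Delta$, one starts from the \emph{shorter} monomial $w$ in which the pair is replaced by $e_{\beta_i+\beta_j}(-m)$ (in $U$ by induction) and applies the $U$-preserving operator $e_{\beta_i}(-n_i)e_{-\beta_i}(n)$ with $n$ large; the bracket $[e_{-\beta_i}(n),e_{\beta_i+\beta_j}(-m)]=\lambda e_{\beta_j}(-n_j)$ regenerates the original monomial up to terms with fewer factors, which the induction handles. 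If instead no $\beta_i+\beta_j$ lies in $\Delta$, one must separately prove (using $\langle\beta_i,\beta_j\rangle\ge 0$ for non-opposite, non-adjacent roots of $osp(1|2n)$ and $\langle\beta_1,\sum_{j\ge 2}\beta_j\rangle=-\langle\beta_1,\beta_1\rangle<0$) that some pair sums to zero, and then invoke the operator claim again. Both the splitting trick and this case analysis are absent from your proposal, so as written the inductive step fails.
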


\begin{proof} First note that $V(k,0)(0)$ is spanned
by the vectors
%\begin{equation}\label{eq:span}
$$a_1(-m_1)\cdots a_s(-m_s)e_{\beta_{1}}(-n_{1})e_{\beta_{2}}(-n_{2})\cdots
e_{\beta_{\nu}}(-n_{\nu})x_{\beta_{\nu+1}}(-n_{\nu+1})x_{\beta_{\nu+2}}(-n_{\nu+2})\cdots
x_{\beta_{t}}(-n_{t})\1$$
%\end{equation}
where $a_i\in \mathfrak h, \beta_j\in\Delta, m_i>0, n_j>0$ and
$\beta_{1}+\beta_{2}+\cdots+\beta_{t}=0.$ Let $U$ be the vertex
operator subalgebra generated by $\alpha_{i}(-1)\1$ and $e_{-\alpha}(-2)e_{\alpha}(-1)\1$, $x_{-\frac{1}{2}\alpha}(-2)x_{\frac{1}{2}\alpha}(-1)\1$, $e_{-\beta}(-2)e_{\beta}(-1)\1$ for
$1\leq i \leq n, \alpha\in\Delta_{0(+)}^L, \beta\in\Delta_{0(+)}^{S}$. Clearly, $\alpha_{i}(-1)\1$ and
$e_{-\alpha}(-2)e_{\alpha}(-1)\1,$ $x_{-\frac{1}{2}\alpha}(-2)x_{\frac{1}{2}\alpha}(-1)\1, e_{-\beta}(-2)e_{\beta}(-1)\1\in V(k,0)(0)$ for $1\leq i \leq n, \alpha\in\Delta_{0(+)}^{L}, \beta\in\Delta_{0(+)}^{S}$. Thus, it suffices to prove that
$V(k,0)(0)\subset U.$

Since $(h(-1)\1)_n = h(n)$ for $h\in \mathfrak h $, we see that
$h(n)U \subset U$ for $h\in \mathfrak h$ and $n\in\Z.$ So we only
need to prove $$u=e_{\beta_{1}}(-n_{1})e_{\beta_{2}}(-n_{2})\cdots
e_{\beta_{\nu}}(-n_{\nu})x_{\beta_{\nu+1}}(-n_{\nu+1})x_{\beta_{\nu+2}}(-n_{\nu+2})\cdots
x_{\beta_{t}}(-n_{t})\1\in U$$ with
$\beta_{1}+\beta_{2}+\cdots+\beta_{t}=0$.  We will prove by
induction on $t$.

It is obvious that $t\geq 2.$ If $t=2,$ it follows from Theorem 2.1 in
\cite{DLWY} that
$$e_{-\alpha}(-m)e_{\alpha}(-n)\1 \in U, \ x_{-\beta}(-m)x_{\beta}(-n)\1 \in U$$
 for $m,n>0, \alpha\in \Delta_{0}, \beta\in \Delta_{1}$.
Note that if $m\geq 0$, then  $$e_{-\alpha}(m)e_{\alpha}(n)\1=
-h_{\alpha}(m+n)\1+mk\<e_{\alpha},e_{-\alpha}\>\delta_{m+n,0}\1\in
U,$$
$$x_{-\beta}(m)x_{\beta}(n)\1=
h_{2\beta}(m+n)\1+mk\<x_{-\beta},x_{\beta}\>\delta_{m+n,0}\1\in
U.$$

We claim that $e_{-\alpha}(m)e_{\alpha}(n)U\subset U$, $x_{-\beta}(m)x_{\beta}(n)U\subset U$ for all
$m,n\in \Z, \alpha\in \Delta_{0}, \beta\in \Delta_{1}.$ Let $u\in U.$ From Proposition 4.5.7 of \cite{LL},
there exist nonnegative integers $p,q$ such that
$$e_{-\alpha}(m)e_{\alpha}(n)u=\sum_{i=0}^p\sum_{j=0}^q\binom{m-q}{i}\binom{q}{j}(e_{-\alpha}(m-q-i+j)e_{\alpha}(-1)\1)_{n+q+i-j}u,$$
and there exist nonnegative integers $p^{'},q^{'}$ such that
$$x_{-\beta}(m)x_{\beta}(n)u=\sum_{i=0}^{p_{'}}\sum_{j=0}^{q^{'}}\binom{m-q^{'}}{i}\binom{q^{'}}{j}(x_{-\beta}(m-q^{'}-i+j)x_{\beta}(-1)\1)_{n+q^{'}+i-j}u.$$
Since $e_{-\alpha}(m-q-i+j)e_{\alpha}(-1)\1\in U, x_{-\beta}(m-q^{'}-i+j)x_{\beta}(-1)\1\in U$, the claim
follows.

Next we assume that $t>2$ and that
$$e_{\beta_{1}}(-n_{1})e_{\beta_{2}}(-n_{2})\cdots
e_{\beta_{\nu}}(-n_{\nu})x_{\beta_{\nu+1}}(-n_{\nu+1})x_{\beta_{\nu+2}}(-n_{\nu+2})\cdots
x_{\beta_{p}}(-n_{p})\1\in U$$ with
$\beta_{1}+\beta_{2}+\cdots+\beta_{p}=0$ for $2\leq p \leq
t-1$ and $n_i>0.$ We have to show that
$$e_{\beta_{1}}(-n_{1})e_{\beta_{2}}(-n_{2})\cdots
e_{\beta_{\nu}}(-n_{\nu})x_{\beta_{\nu+1}}(-n_{\nu+1})x_{\beta_{\nu+2}}(-n_{\nu+2})\cdots
x_{\beta_{t}}(-n_{t})\1\in U$$ with
$\beta_{1}+\beta_{2}+\cdots+\beta_{t}=0$. We divide the proof into
two cases.

{\bf Case 1.} There exist $1\leq i,j\leq t$ such that
$\beta_{i}+\beta_{j}\in \Delta.$ Note that if
 $$e_{\beta_{1}}(-n_{1})e_{\beta_{2}}(-n_{2})\cdots
e_{\beta_{\nu}}(-n_{\nu})x_{\beta_{\nu+1}}(-n_{\nu+1})x_{\beta_{\nu+2}}(-n_{\nu+2})\cdots
x_{\beta_{t}}(-n_{t})\1\in U,$$
 then we consider the following three subcases: $(1) 1\leq i,j\leq \nu$, $(2) 1\leq i\leq \nu, \ \nu+1\leq j\leq t$, $(3) \nu+1\leq i, j\leq t$.\\

If $(1) 1\leq i,j\leq \nu$, then $$e_{\beta_{i_1}}(-n_{i_1})\cdots e_{\beta_{i_\nu}}(-n_{i_\nu})x_{\beta_{i_{\nu+1}}}(-n_{i_{\nu+1}})\cdots
x_{\beta_{i_t}}(-n_{i_t})\1\in U$$ by the induction assumption,
where $(i_1,...,i_\nu)$ is any permutation of $(1,...,\nu)$ and $(i_{\nu+1},...,i_t)$ is any permutation of $(\nu+1,...,t).$ Without
loss of generality, we may assume that $\beta_{1}+\beta_{2}\in
\Delta$.

Let $m,n$ be positive integers such that $-m+n=-n_2$ and $n>n_i$
for $i\geq 3.$ Let
$w=e_{\beta_{1}+\beta_{2}}(-m)e_{\beta_{3}}(-n_{3})\cdots
\cdots
e_{\beta_{\nu}}(-n_{\nu})x_{\beta_{\nu+1}}(-n_{\nu+1})x_{\beta_{\nu+2}}(-n_{\nu+2})\cdots
x_{\beta_{t}}(-n_{t})\1$ with
$\beta_{1}+\beta_{2}+\cdots+\beta_{t}=0.$ Then $w\in U$ by the
induction assumption and $e_{\beta_1}(-n_1)e_{-{\beta_1}}(n)w\in
U$ by the claim.

Let $[e_{-\b_1},e_{\b_1+\b_2}]=\lambda e_{\b_2}$ for some nonzero
$\lambda.$ Then
\begin{equation*}
\begin{split}
&\qquad \quad e_{\beta_1}(-n_1)e_{-\beta_1}(n)w= \lambda
e_{\beta_1}(-n_1)e_{\beta_2}(-n_2)e_{\beta_3}(-n_3)\cdots
e_{\beta_t}(-n_t)\1 \\
& \qquad \qquad +
e_{\b_1}(-n_1)e_{\b_1+\b_2}(-m)[e_{-\b_1},e_{\b_3}](n-n_3)e_{\b_4}(-n_4)\cdots
e_{\beta_t}(-n_t)\1\\
& \qquad \qquad +\cdots
+e_{\b_1}(-n_1)e_{\b_1+\b_2}(-m)e_{\b_3}(-n_3)\cdots
[e_{-\beta_1},x_{\beta_t}](n-n_t)\1.
\end{split}
\end{equation*}
Since $n-n_i>0$ for $i\geq 3$, we see that
\begin{equation*}
\begin{split}
& \qquad
e_{\b_1}(-n_1)e_{\b_1+\b_2}(-m)[e_{-\b_1},e_{\b_3}](n-n_3)e_{\b_4}(-n_4)\cdots
e_{\beta_t}(-n_t)\1\\
& \qquad \qquad +\cdots
+e_{\b_1}(-n_1)e_{\b_1+\b_2}(-m)e_{\b_3}(-n_3)\cdots
[e_{-\beta_1},x_{\beta_t}](n-n_t)\1
\end{split}
\end{equation*}
lies in $U$ by induction assumption. As a result,
$$e_{\beta_{1}}(-n_{1})e_{\beta_{2}}(-n_{2})\cdots
e_{\beta_{\nu}}(-n_{\nu})x_{\beta_{\nu+1}}(-n_{\nu+1})x_{\beta_{\nu+2}}(-n_{\nu+2})\cdots
x_{\beta_{t}}(-n_{t})\1\in U.$$ Similar analysis to the subcases (2) and (3), we can also get $$e_{\beta_{1}}(-n_{1})e_{\beta_{2}}(-n_{2})\cdots
e_{\beta_{\nu}}(-n_{\nu})x_{\beta_{\nu+1}}(-n_{\nu+1})x_{\beta_{\nu+2}}(-n_{\nu+2})\cdots
x_{\beta_{t}}(-n_{t})\1\in U.$$

{\bf Case 2.} For any $1\leq i,j\leq t,
\;\beta_{i}+\beta_{j}\notin \Delta.$ We claim that  there exist
$1\leq i^{'},j^{'}\leq t$ such that
$\beta_{i^{'}}+\beta_{j^{'}}=0.$ In fact, if $\b_i+\b_j\ne 0$ for
all $i,j.$ Together with the fact that for $\alpha,\beta\in\Delta_{0(+)}^L$, $\alpha+\beta\notin\Delta_{0(+)}^L
$, we then deduce that $\<\b_i,\b_j\>\geq 0$ for all $i,j$.
Thus $\<\b_1,\sum_{j=2}^t\b_j\>\geq 0.$ On the other hand, since
$\sum_{j=2}^t\b_j=-\b_1$, notice that $\g=osp(1|2n)$, we have $\<\b_1,\sum_{j=2}^t\b_j\><0,$ a
contradiction. Without loss of generality, we may assume $\beta_{1}+\beta_{2}=0$ or $\beta_{\nu+1}+\beta_{\nu+2}=0$. Then
$\beta_3+\cdots+\beta_t=0$ or $\beta_1+\cdots+\beta_\nu+\beta_{\nu+3}+\cdots+\beta_t=0$. By the induction assumption,
$$e_{\beta_{3}}(-n_{3})\cdots e_{\beta_{\nu}}(-n_{\nu})\cdot x_{\beta_{\nu+1}}(-n_{\nu+1}) \cdots
x_{\beta_{t}}(-n_{t})\1\in U,$$  or $$e_{\beta_{1}}(-n_{1})\cdots
e_{\beta_{\nu}}(-n_{\nu})x_{\beta_{\nu+3}}(-n_{\nu+3})\cdots
x_{\beta_{t}}(-n_{t})\1\in U.$$
Thus from the above claim, we have
$$e_{\beta_{1}}(-n_{1})e_{\beta_{2}}(-n_{2})\cdots
e_{\beta_{\nu}}(-n_{\nu})x_{\beta_{\nu+1}}(-n_{\nu+1})x_{\beta_{\nu+2}}(-n_{\nu+2})\cdots
x_{\beta_{t}}(-n_{t})\1\in U$$ as desired.
\end{proof}

%Next we discuss some  automorphisms of vertex operator superalgebras
%$V(k,0)$ and $V(k,0)(0)$ for later purposes. It is well known that
%the automorphism group $Aut(V(k,0))$ is isomorphic to the
%automorphism group $Aut(\g).$ In fact, if $\sigma\in Aut(\g)$,
%then $\sigma$ lifts to an automorphism of $V(k,0)$ in the
%following way:
%$$\sigma(x_1(-n_1)\cdots x_s(-n_s)\1)=(\sigma x_1)(-n_1)\cdots (\sigma
%x_s)(-n_s)\1$$ for $x_i\in \g$ and $n_i>0.$ Let $W(\g)$ be the
%Weyl group of $\g.$ Then $W(\g)$ can naturally be regarded as a
%subgroup of $Aut (\g)$ \cite{Hu}. It is easy to see that if
%$\sigma(\h)=\h$, then $\sigma (V(k,0)(0))=V(k,0)(0)$ and the
%restriction of $\sigma$ to $V(k,0)(0)$ gives an automorphism of
%$V(k,0)(0).$ In particular, any Weyl group element gives an
%automorphism of $V(k,0)(0).$ This fact will be used in later
%sections.

\section{Vertex operator algebra $N(\g,k)$ }
\label{walgebra}
\def\theequation{3.\arabic{equation}}
\setcounter{equation}{0}

Let $V_{\wh}(k,0)$ be the vertex operator subalgebra of $V(k,0)$
generated by $h(-1)\1$ for $h\in \mathfrak h$ with the Virasoro
element
%\begin{equation}\label{eq:omega_gamma}
$$\omega_{\mathfrak h} = \frac{1}{2k}
\sum_{i=1}^{n}h_i(-1)h_i(-1)\1$$
%\end{equation}
of central charge $n$, where $\{h_1,\cdots h_n\}$ is an
orthonormal basis of $\mathfrak h$ as before. For $\lambda\in
{\mathfrak h}^*$, let $M_{\wh}(k,\lambda)$ denote the irreducible
highest weight module for $\wh$ with a highest weight vector
$v_\lambda$ such that $h(0)v_\lambda = \lambda(h) v_\lambda$ for
$h\in \mathfrak h.$ Then $V_{\wh}(k,0)$ is identified with
$M_{\wh}(k,0).$

Recall from  Section 2 that both $V(k,0)$ and $V(k,0)(\lambda)$, $\lambda \in Q$
are completely reducible $V_{\wh}(k,0)$-modules. That is,
\begin{equation}\label{eq:dec-Heisenberg}
V(k,0) = \oplus_{\lambda\in Q} M_{\wh}(k,\lambda) \otimes
N_\lambda,
\end{equation}
\begin{equation}\label{eq:dec-Heisenberg1}
V(k,0)(\lambda)= M_{\wh}(k,\lambda) \otimes N_\lambda
\end{equation}
where
\begin{equation*}
N_\lambda = \{ v \in V(k,0)\,|\, h(m)v =\lambda(h)\delta_{m,0}v
\text{ for }  h\in \mathfrak h, m \ge 0\}
\end{equation*}
is the space of highest weight vectors with highest weight $\lambda$ for
$\wh.$

Note that $N(\g,k)=N_0$ is the commutant of
$V_{\wh}(k,0)$ in $V(k,0)$\cite{FZ}. The commutant $N(\g,k)$ is a vertex
operator algebra with the Virasoro vector $\omega =
\omega_{\mraff} - \omega_{\mathfrak h}$ whose central charge is
$\frac{kn(2n-1)}{k+n+\frac{1}{2}}-n.$

%Recall from Section 2, the 3-dimensional subalgebra $\g^\alpha$
%for $\alpha\in \Delta_+.$ Then the restriction $\<,\>_{\g^\alpha}$
%of the bilinear form $\<,\>$ to $\g^\a$ is equal to
%$\frac{\<\theta,\theta\>}{\<\alpha,\alpha\>}(,)$, where $(,)$ is
%the standard nondegenerate symmetric invariant bilinear form on
%$\g^\a$ such that $(h_\a,h_\a)=2.$ As a result, $V(k,0)$ is a
%module for $\widehat{\g^\a}=\g^\a\otimes \C[t,t^{-1}]\oplus \C K$
%of level $k_{\alpha}=\frac{\<\theta,\theta\>}{\<\alpha,\alpha\>}k$
%as we regard $V(k,0)$ as a module for the subalgebra
%$\widehat{\g^\a}$ of $\widehat\g.$ In other words, $V(k,0)$ is a
%$\widehat{\g^\a}$-module of level $2k$ or $3k$ if $\alpha$ is a
%short root.

We let
\begin{equation}\label{eq:w3}
\begin{split}
\omega_{\alpha}
=\frac{1}{2k(k+2)}(-h_{\alpha}(-1)^{2}{\1}
+2ke_{\alpha}(-1)e_{-\alpha}(-1){\1}-kh_{\alpha}(-2){\1}),
\end{split}
\end{equation}

\begin{equation}\label{eq:w3'}
\begin{split}
\bar{\omega}_{\alpha}
=-h_{\alpha}(-1)^{2}{\1}
+4kx_{\frac{1}{2}\alpha}(-1)x_{-\frac{1}{2}\alpha}(-1){\1}-2kh_{\alpha}(-2){\1},
\end{split}
\end{equation}

\begin{equation}\label{eq:W3'}
\begin{split}
W_{\alpha}^3 &= k^2 h_\alpha(-3){\1} + 3 k
h_\alpha(-2)h_\alpha(-1){\1} + 2h_\alpha(-1)^3{\1}
-6k h_\alpha(-1)e_{\alpha}(-1)e_{-\alpha}(-1){\1}
\\
&+3k^2e_{\alpha}(-2)e_{-\alpha}(-1){\1}
-3k^2e_{\alpha}(-1)e_{-\alpha}(-2){\1},
\end{split}
\end{equation}

\begin{equation}\label{eq:W3'''}
\begin{split}
\bar{W}_{\alpha}^3 &= k^2 h_\alpha(-3){\1} + 3 k
h_\alpha(-2)h_\alpha(-1){\1} + h_\alpha(-1)^3{\1}
-6k h_\alpha(-1)x_{\frac{1}{2}\alpha}(-1)x_{-\frac{1}{2}\alpha}(-1){\1}\\&
+6k^2x_{\frac{1}{2}\alpha}(-2)x_{-\frac{1}{2}\alpha}(-1){\1}-6k^2x_{\frac{1}{2}\alpha}(-1)x_{-\frac{1}{2}\alpha}(-2){\1},\end{split}
\end{equation}

for $\alpha\in \Delta_{0(+)}^{L}$.

\begin{equation}\label{eq:W3}
\begin{split}
\omega_{\alpha}
=\frac{1}{8k(k+1)}( -2kh_\alpha(-2){\1} -h_\alpha(-1)^{2}{\1}
+4ke_{\alpha}(-1)e_{-\alpha}(-1){\1}),
\end{split}
\end{equation}

\begin{equation}\label{eq:W3''}
\begin{split}
W_{\alpha}^3 &=4k^2 h_\alpha(-3){\1} + 6 k
h_\alpha(-2)h_\alpha(-1){\1} + 2h_\alpha(-1)^3{\1}
-12k h_\alpha(-1)e_{\alpha}(-1)e_{-\alpha}(-1){\1}
\\
&+12k^2e_{\alpha}(-2)e_{-\alpha}(-1){\1}-12k^2e_{\alpha}(-1)e_{-\alpha}(-2){\1},
\end{split}
\end{equation}
for $\alpha\in \Delta_{0(+)}^{S}$.

%Let $\widehat{P}_{\alpha}$ be the vertex operator subalgebra of
%$N(\g,k)$ generated by $\omega_{\alpha}$ and $W_\alpha^3.$ Then
%$\widehat{P}_\alpha$ is isomorphic to the $W$-algebra $W(2,3,4,5)$
%\cite{BEHHH} by \cite[Theorem 3.1]{DLY3} with $k$ replacing by
%$k_\a$, i.e., $\widehat{P}_{\alpha}$ is isomorphic to
%$N(sl_2,k_{\a}).$

The following theorem gives the generators of $N(\g,k)$.
\begin{thm}\label{generator2} The vertex operator
algebra $N(\g,k)$ is generated by dim$\mathfrak{g}-\mbox{dim}\mathfrak{h}$ vectors $\omega_{\alpha}$, $\bar{\omega}_{\alpha}$, $W_{\alpha}^3$, $\bar{W}_{\alpha}^3$ for $\alpha\in \Delta_{0(+)}^{L}$ and $\omega_{\alpha}$, $W_{\alpha}^3$ for $\alpha\in \Delta_{0(+)}^{S}$. That is, $N(\g,k)$  is generated by $N(osp(1|2),k_{\alpha})$ for $\alpha\in \Delta_{0(+)}^{L}$ and $N(sl_2,k_{\alpha})$ for  $\alpha\in \Delta_{0(+)}^{S}$, $k_{\alpha}=\frac{2}{\langle\alpha,\alpha\rangle}k$.

\end{thm}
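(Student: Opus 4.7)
The plan is to reduce the generation problem for $N(\g,k)$ to Theorem~\ref{generator1} via the Heisenberg tensor decomposition $V(k,0)(0)=M_{\wh}(k,0)\otimes N(\g,k)$. Let $U$ denote the vertex operator subalgebra of $N(\g,k)$ generated by the vectors listed in the statement. Because every element of $U$ is annihilated by $h(m)$ for $h\in\h$ and $m\ge 0$, the vertex subalgebra of $V(k,0)(0)$ generated by $M_{\wh}(k,0)$ together with $U$ coincides with $M_{\wh}(k,0)\otimes U$. Consequently, once I establish $M_{\wh}(k,0)\otimes U=V(k,0)(0)$, the desired equality $U=N(\g,k)$ follows by comparison of tensor factors.

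First I would verify that each of the proposed generators actually lies in $N(\g,k)$, i.e., that $h(m)v=0$ for all $h\in\h$ and $m\ge 0$. Using the affine bracket $[h(m),a(n)]=\a(h)a(m+n)+m\la h,a\ra k\delta_{m+n,0}$ for $a\in\g_{\a}$ together with $h(m)\1=0$ for $m\ge 0$, this reduces to tracking the weight and pairing contributions of the $e_{\pm\a}$ and $x_{\pm\frac12\a}$ factors. The even pieces ($\om_{\a}$, $W_{\a}^{3}$) mirror the $\hsl$-computations of \cite{DLWY}, while the odd pieces ($\bar{\om}_{\a}$, $\bar W_{\a}^{3}$) are the analogous $osp(1|2)$-calculations performed in \cite{CFK}; the super-signs introduced by the anticommutators among the $x_{\pm\frac12\a}$ are the only extra bookkeeping.

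Next I would invert the defining formulas (\ref{eq:w3})--(\ref{eq:W3''}) to recover the generators of $V(k,0)(0)$ listed in Theorem~\ref{generator1}, modulo $M_{\wh}(k,0)$. For $\a\in\Delta_{0(+)}^{L}$, equation (\ref{eq:w3}) directly expresses $e_{\a}(-1)e_{-\a}(-1)\1$ as $(k+2)\om_{\a}$ plus Heisenberg monomials; applying $L(-1)$ then yields $e_{\a}(-2)e_{-\a}(-1)\1+e_{\a}(-1)e_{-\a}(-2)\1$, whereas equation (\ref{eq:W3'}) produces the difference $e_{\a}(-2)e_{-\a}(-1)\1-e_{\a}(-1)e_{-\a}(-2)\1$ in terms of $W_{\a}^{3}$, $h_{\a}(-1)e_{\a}(-1)e_{-\a}(-1)\1$, and Heisenberg terms. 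Solving this $2\times 2$ system and applying the affine commutator $[e_{\a}(-1),e_{-\a}(-2)]=h_{\a}(-3)$ expresses $e_{-\a}(-2)e_{\a}(-1)\1$ as an element of $M_{\wh}(k,0)\otimes U$. The identical scheme applied to (\ref{eq:w3'}) and (\ref{eq:W3'''}), using the super-anticommutator $\{x_{\frac12\a}(-1),x_{-\frac12\a}(-2)\}=h_{\a}(-3)$, yields $x_{-\frac12\a}(-2)x_{\frac12\a}(-1)\1$; and for $\b\in\Delta_{0(+)}^{S}$ the pair (\ref{eq:W3})--(\ref{eq:W3''}) yields $e_{-\b}(-2)e_{\b}(-1)\1$. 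Since the Cartan generators $\a_{i}(-1)\1$ lie trivially in $M_{\wh}(k,0)$, Theorem~\ref{generator1} gives $V(k,0)(0)\subseteq M_{\wh}(k,0)\otimes U$, completing the reduction.

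The main obstacle I expect is the bookkeeping in the odd-root case: one must verify that the linear system extracted from $\bar{\om}_{\a}$ and $\bar W_{\a}^{3}$ is nondegenerate (so the inversion actually succeeds) and that the super-signs from the anticommutation $\{x_{\frac12\a},x_{-\frac12\a}\}=h_{\a}$ together with the skew-pairing $\la x_{\frac12\a},x_{-\frac12\a}\ra=-\la x_{-\frac12\a},x_{\frac12\a}\ra$ do not obstruct membership in $N(\g,k)$. Once these checks are in place, the second assertion of the theorem---that the building blocks are $N(osp(1|2),k_{\a})$ for long $\a$ and $N(sl_2,k_{\a})$ for short $\a$---is a direct corollary: the four vectors $\om_{\a},\bar{\om}_{\a},W_{\a}^{3},\bar W_{\a}^{3}$ are the standard generators of the parafermion algebra attached to the $osp(1|2)$-subalgebra $\bar{\g}^{\a}$ used in \cite{CFK}, whereas $\om_{\b},W_{\b}^{3}$ are the Dong--Lepowsky--Wang generators of the parafermion algebra of $\g^{\b}\cong sl_2$ at level $k_{\b}=\frac{2}{\la\b,\b\ra}k$.
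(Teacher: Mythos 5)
Your proposal is correct and follows essentially the same route as the paper: reduce to Theorem~\ref{generator1} via the factorization $V(k,0)(0)=M_{\wh}(k,0)\otimes N(\g,k)$, recover $e_{-\alpha}(-2)e_{\alpha}(-1)\1$ and $x_{-\frac{1}{2}\alpha}(-2)x_{\frac{1}{2}\alpha}(-1)\1$ from $\omega_{\alpha}$, $\bar{\omega}_{\alpha}$, $W_{\alpha}^3$, $\bar{W}_{\alpha}^3$ by combining the $L_{\mraff}(-1)$-derivative (the symmetric combination) with the antisymmetric combination read off from $W_{\alpha}^3$, $\bar{W}_{\alpha}^3$, and check that the proposed generators are killed by $h(m)$, $m\ge 0$. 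No substantive differences from the paper's argument.
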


\begin{proof} Note that $V(k,0)(0)= M_{\wh}(k,0)
\otimes N(\g,k).$  Firstly we prove that
$V(k,0)(0)$ is generated by vectors  $\alpha_{i}(-1)\1$, $\omega_{\alpha}$, $\bar{\omega}_{\alpha}$, $W_{\alpha}^3$, $\bar{W}_{\alpha}^3$ for $i=1,\cdots, n$, $\alpha\in \Delta_{0(+)}^{L}$ and $\omega_{\alpha}$, $W_{\alpha}^3$ for $\alpha\in \Delta_{0(+)}^{S}$. In fact, let $U$ be the vertex operator
subalgebra generated by  $h(-1)\1$ for $h\in\h$, $\omega_{\alpha}$, $\bar{\omega}_{\alpha}$, $W_{\alpha}^3$, $\bar{W}_{\alpha}^3$ for $\alpha\in \Delta_{0(+)}^{L}$ and $\omega_{\alpha}$, $W_{\alpha}^3$ for $\alpha\in \Delta_{0(+)}^{S}$. Then
$e_{-\alpha}(-1)e_{\alpha}(-1)\1\in U$, $x_{-\frac{1}{2}\alpha}(-1)x_{\frac{1}{2}\alpha}(-1)\1\in U$ and $\omega_{\mraff}\in U$. Moreover, from the expression
 of $W_{\alpha}^3$ and $\bar{W}_{\alpha}^3$, we see that $e_{-\alpha}(-1)e_{\alpha}(-2)\1 -
e_{-\alpha}(-2)e_{\alpha}(-1)\1\in U$, $x_{-\frac{1}{2}\alpha}(-1)x_{\frac{1}{2}\alpha}(-2)\1 -
x_{-\frac{1}{2}\alpha}(-2)x_{\frac{1}{2}\alpha}(-1)\1\in U$. Set
$L_{\mraff}(n)=({\omega_{\mraff}})_{n+1}$. A direct calculation shows that
\begin{equation*}
[L_{\mraff}(m), a(n)]=-na(m+n)
\end{equation*}
for $m,n\in\Z, a\in \g$. Thus,
\begin{equation*}
\begin{split}
L_{\mraff}(-1)e_{-\alpha}(-1)e_{\alpha}(-1)\1=e_{-\alpha}(-2)e_{\alpha}(-1)\1+e_{-\alpha}(-1)e_{\alpha}(-2)\1\in
U.
\end{split}
\end{equation*}
\begin{equation*}
\begin{split}
L_{\mraff}(-1)x_{-\frac{1}{2}\alpha}(-1)x_{\frac{1}{2}\alpha}(-1)\1=x_{-\frac{1}{2}\alpha}(-2)x_{\frac{1}{2}\alpha}(-1)\1+x_{-\frac{1}{2}\alpha}(-1)x_{\frac{1}{2}\alpha}(-2)\1\in
U.
\end{split}
\end{equation*}
Since $e_{-\alpha}(-1)e_{\alpha}(-2)\1 -
e_{-\alpha}(-2)e_{\alpha}(-1)\1\in U$, $x_{-\frac{1}{2}\alpha}(-1)x_{\frac{1}{2}\alpha}(-2)\1 -
x_{-\frac{1}{2}\alpha}(-2)x_{\frac{1}{2}\alpha}(-1)\1\in U$, we get $e_{-\alpha}(-2)e_{\alpha}(-1)\1 \in U$,
$x_{-\frac{1}{2}\alpha}(-2)x_{\frac{1}{2}\alpha}(-1)\1 \in U$, thus by Theorem \ref{generator1}, $U$ is equal to
$V(k,0)(0)$.

Next we show that $\omega_{\alpha}$, $\bar{\omega}_{\alpha}$, $W_{\alpha}^3$, $\bar{W}_{\alpha}^3\in N(\g,k)$ for $\alpha\in \Delta_{0(+)}^{L}$ and $\omega_{\alpha}$, $W_{\alpha}^3\in N(\g,k)$ for $\alpha\in \Delta_{0(+)}^{S}$. Since $\la h_\alpha,h_\alpha\ra \ne 0,$ we
have decomposition $\mathfrak h=\C h_\alpha \oplus (\C
h_\alpha)^{\bot}$, where $(\C h_\alpha)^{\bot}$ is the orthogonal
complement of $\C h_\alpha$ with respect to $\la,\ra.$ By direct calculations, we know that
$h_\alpha(n)\omega_{\alpha}=h_\alpha(n)\bar{\omega}_{\alpha}=h_\alpha(n)W_{\alpha}^3=h_\alpha(n)\bar{W}_{\alpha}^3=0$ for $n\geq
0,$ $\alpha\in \Delta_{0(+)}^{L}$, and $h_\alpha(n)\omega_{\alpha}=h_\alpha(n)W_{\alpha}^3=0$ for $n\geq
0,$ $\alpha\in \Delta_{0(+)}^{S}$. If $u\in (\C h_\alpha)^{\bot},$ we have $u(n)\omega_{\alpha}=u(n)\bar{\omega}_{\alpha}=u(n)W_{\alpha}^3=u(n)\bar{W}_{\alpha}^3=0$ for $n\geq
0,$ $\alpha\in \Delta_{0(+)}^{L}$, and $u(n)\omega_{\alpha}=u(n)W_{\alpha}^3=0$ for $n\geq
0,$ $\alpha\in \Delta_{0(+)}^{S}$.This
shows that $\omega_{\alpha}$, $\bar{\omega}_{\alpha}$, $W_{\alpha}^3$, $\bar{W}_{\alpha}^3\in N(\g,k)$ for $\alpha\in \Delta_{0(+)}^{L}$ and $\omega_{\alpha}$, $W_{\alpha}^3\in N(\g,k)$ for $\alpha\in \Delta_{0(+)}^{S}$.

Since $Y(u,z_1)Y(v,z_2 )=Y(v,z_2)Y(u,z_1)$ for $u \in
M_{\wh}(k,0)$, $v \in N(\g,k)$ and $V(k,0)(0)= M_{\wh}(k,0)
\otimes N(\g,k),$ $h(-1)\1 \in M_{\wh}(k,0)$ for $h\in \h$,
$\omega_{\alpha}$, $\bar{\omega}_{\alpha}$, $W_{\alpha}^3$, $\bar{W}_{\alpha}^3\in N(\g,k)$ for $\alpha\in \Delta_{0(+)}^{L}$ and $\omega_{\alpha}$, $W_{\alpha}^3\in N(\g,k)$ for $\alpha\in \Delta_{0(+)}^{S}$, it follows that $N(\g,k)$ is generated by
 $\omega_{\alpha}$, $\bar{\omega}_{\alpha}$, $W_{\alpha}^3$, $\bar{W}_{\alpha}^3$ for $\alpha\in \Delta_{0(+)}^{L}$ and $\omega_{\alpha}$, $W_{\alpha}^3$ for $\alpha\in \Delta_{0(+)}^{S}$.
\end{proof}

\begin{rmk} We want to point out that generators $\omega_{\alpha}$ for $\alpha\in \Delta_{0(+)}$ are Virasoro elements, but $\bar{\omega}_{\alpha}$ for $\alpha\in \Delta_{0(+)}^{L}$ are not Virasoro elements. Notice that the generators of $N(\g,k)$ are all in even part of the affine vertex operator superalgebra $V(k,0)$, thus $N(\g,k)$ is a vertex operator algebra. Moreover, the vertex operator algebra $N(\g,k)$ and its quotient $K(\g,k)$
are of moonshine type. That is, their weight zero subspaces are 1-dimensional
and weight one subspaces are zero.
\end{rmk}

%Following the discussion given at the end of Section 2, we see that
%any Weyl group element gives an automorphism of $N(\g,k).$

\section{Parafermion vertex operator algebras $K(\g,k)$
}\label{Sect:maximal-ideal-tI}
\def\theequation{4.\arabic{equation}}
\setcounter{equation}{0}

The vertex operator superalgebra $V(k,0)$ has a
unique maximal ideal $\J$ generated by a weight $k+1$ vector
$e_{\theta}(-1)^{k+1}\1$ \cite{GS}, where $\theta$ is the highest
root of $\g$, and $e_{\theta}\in \g_{0}$, $\g_{0}$ is the even part of $\g$. The quotient vertex operator superalgebra $L(k,0) =
V(k,0)/\J$ is a simple, rational and $C_2$-cofinite  vertex operator algebra
associated to affine Lie algebra $\widehat{\g}$ \cite{AL},\cite{CL}. Moreover, the
Heisenberg vertex operator algebra $V_{\wh}(k,0)$ generated by
$h(-1)\1$ for $h\in \mathfrak h$ is a simple subalgebra of
$L(k,0)$ and $L(k,0)$ is a completely reducible
$V_{\wh}(k,0)$-module. We have a decomposition
\begin{equation}
L(k,0) = \oplus_{\lambda\in Q} M_{\wh}(k,\lambda) \otimes
K_\lambda
\end{equation}
as modules for $V_{\wh}(k,0)$, where
\begin{equation*}
K_\lambda = \{v \in L(k,0)\,|\, h(m)v =\lambda(h)\delta_{m,0}v
\text{ for }\; h\in {\mathfrak h},
 m \ge 0\}.
\end{equation*}
Set $K(\g,k)=K_0.$ Then $K(\g,k)$ is the commutant of
$V_{\wh}(k,0)$ in $L(k,0)$ and is called the parafermion vertex
operator algebra associated to the integrable highest weight
module $L(k,0)$ for $\widehat{\g}.$ Since $K(\g,k)$ is the extension of the rational and $C_2$-cofinite vertex operator algebra $K(sp(2n),k)$, $K(\g,k)$ is rational and
$C_2$-cofinite \cite{HKL}.

As a $V_{\wh}(k,0)$-module, $\J$ is completely reducible. From
\eqref{eq:dec-Heisenberg},
\begin{equation*}
\J = \oplus_{\lambda\in Q} M_{\wh}(k,\lambda) \otimes (\J \cap
N_\lambda).
\end{equation*}
In particular, $\tI = \J \cap N(\g,k)$ is an ideal of $N(\g,k)$
and $K(\g,k) \cong N(\g,k)/\tI$. Following the proof as
\cite[Lemma 3.1]{DLY2}, we know that $\tI$ is the unique maximal
ideal of $N(\g,k).$ Thus $K(\g,k)$ is a simple vertex operator
algebra. We still use $\omega_{\mraff}$, $\omega_{\mathfrak h}$,
$\omega_{\alpha}$,$\bar{\omega}_{\alpha}$, $W_{\alpha}^3$, $\bar{W}_{\alpha}^3$ to denote their
images in $L(k,0) = V(k,0)/\J$.
%\begin{rmk} In the case $k=1$, it follows from the construction
%of $L(1,0)$ \cite{FK}, \cite{FLM} that $\omega=0$ and $K(\g,k)=\C.$
%\end{rmk}
The following result follows from Theorem
\ref{generator2}.

\begin{thm}\label{generator3} The simple vertex operator algebra $K(\g,k)$
is generated by $\omega_{\alpha}$, $\bar{\omega}_{\alpha}$, $W_{\alpha}^3$, $\bar{W}_{\alpha}^3$ for $\alpha\in \Delta_{0(+)}^{L}$ and $\omega_{\alpha}$, $W_{\alpha}^3$ for $\alpha\in \Delta_{0(+)}^{S}$.
\end{thm}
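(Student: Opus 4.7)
The strategy is to push the generating set for $N(\g,k)$ from Theorem \ref{generator2} through the canonical surjection $\pi\colon N(\g,k)\twoheadrightarrow K(\g,k)$. Recall that $K(\g,k)\cong N(\g,k)/\tI$, where $\tI=\J\cap N(\g,k)$ is the unique maximal ideal of $N(\g,k)$, so $\pi$ is a surjective homomorphism of vertex operator algebras, and the images of $\omega_\alpha,\bar\omega_\alpha,W_\alpha^3,\bar W_\alpha^3$ in the quotient are (per the convention fixed just before the theorem) denoted by the same symbols.

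The key general fact is that if $\pi\colon A\twoheadrightarrow B$ is a surjective homomorphism of vertex operator algebras and a subset $S\subset A$ generates $A$, then $\pi(S)$ generates $B$. Indeed, every $b\in B$ has the form $b=\pi(a)$ for some $a\in A$; by hypothesis $a$ is a linear combination of iterated component products $u^{(1)}_{n_1}u^{(2)}_{n_2}\cdots u^{(r)}_{n_r}\1$ with $u^{(i)}\in S$, and since $\pi$ preserves the vacuum and the vertex operator maps, applying $\pi$ expresses $b$ as the same expression in $\pi(S)$ acting on $\1_B$. Applying this with $A=N(\g,k)$, $B=K(\g,k)$, and $S$ equal to the generating set furnished by Theorem \ref{generator2} yields the theorem.

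There is essentially no obstacle to overcome: the only verifications needed are that $\pi$ is a homomorphism of vertex operator algebras (automatic from the construction of $K(\g,k)$ as the simple quotient) and that the specified vectors of $N(\g,k)$ have well-defined images in $K(\g,k)$ (automatic, since they lie in $N(\g,k)$). One could remark as a caveat that some of the images in $K(\g,k)$ might a priori coincide or vanish in small-level or small-rank cases, but such redundancy does not affect the conclusion that the listed set generates $K(\g,k)$.
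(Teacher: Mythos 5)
Your proposal is correct and is exactly the argument the paper intends: the paper simply states that the theorem ``follows from Theorem \ref{generator2},'' the implicit reasoning being that the surjection $N(\g,k)\twoheadrightarrow N(\g,k)/\tI\cong K(\g,k)$ carries a generating set to a generating set. You have merely spelled out this standard fact, so there is no substantive difference from the paper's approach.
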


Now we characterize the ideal $\tI$ of $N(\g,k)$. The vector
$x_{\theta}(-1)^{k+1}\1\notin N(\g,k)$. From \cite[Theorem
3.2]{DLY2} we know that
$h_\theta(n)x_{-\theta}(0)^{k+1}x_{\theta}(-1)^{k+1}\1=0$ for
$n\geq 0.$ It is clear that if $h\in \mathfrak h$ satisfies $\la
h_\theta,h\ra=0$, then
$h(n)x_{-\theta}(0)^{k+1}x_{\theta}(-1)^{k+1}\1=0$ for $n\geq 0.$
So we have the following result.

\begin{lem}\label{l1} $e_{-\theta}(0)^{k+1}e_{\theta}(-1)^{k+1}\1\in \tI.$
\end{lem}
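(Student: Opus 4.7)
The plan is to verify the two defining conditions for membership in $\tI = \J \cap N(\g,k)$ applied to the vector $v := e_{-\theta}(0)^{k+1}e_{\theta}(-1)^{k+1}\1$. Containment $v \in \J$ is immediate: by hypothesis $e_{\theta}(-1)^{k+1}\1$ generates $\J$, and $\J$ is closed under every component operator $a(n)$ for $a \in \g$; in particular it is closed under iteration of $e_{-\theta}(0)$.

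The substantive step is to prove $v \in N(\g,k) = N_0$, i.e.\ $h(m)v = 0$ for all $h \in \h$ and $m \ge 0$. This splits along the orthogonal decomposition $\h = \C h_\theta \oplus (\C h_\theta)^{\perp}$. For $h \in (\C h_\theta)^{\perp}$, one checks $\theta(h) = 0$ (using that $\theta$ is long, so $t_\theta$ is proportional to $h_\theta$); then $h(m)$ commutes with every mode of $e_{\pm\theta}$, and since $h(m)\1 = 0$ for $m \ge 0$, one obtains $h(m)v = 0$ immediately. For the remaining case $h = h_\theta$, the required identity $h_\theta(n)v = 0$ for $n \ge 0$ is precisely what is recorded in the paragraph immediately preceding the lemma statement, invoking \cite[Theorem 3.2]{DLY2}.

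The main obstacle is this last identity, $h_\theta(n)v = 0$ for $n \ge 0$, which is the only part not reducible to a one-line commutator calculation. Fortunately no new computation is needed: the argument of \cite[Theorem 3.2]{DLY2} is carried out entirely within the $sl_2$-triple spanned by $\{e_\theta, h_\theta, e_{-\theta}\}$ acting on the vacuum module, and this triple sits inside the even part $\g_0 \subset \g = osp(1|2n)$. Since no odd generators enter the combinatorial commutator manipulations, the proof transfers verbatim to our setting. Assembling the three pieces (closure under $e_{-\theta}(0)$, the easy case $h \perp h_\theta$, and the cited identity for $h = h_\theta$) yields $v \in \tI$.
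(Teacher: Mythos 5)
Your proposal is correct and follows the same route as the paper: show the vector lies in $\J$ by closure under the mode $e_{-\theta}(0)$, then verify it lies in $N_0$ by splitting $\h=\C h_\theta\oplus(\C h_\theta)^{\perp}$, handling the orthogonal complement by a commutator argument and the $h_\theta$ direction by citing \cite[Theorem 3.2]{DLY2}. Your added observation that the cited computation takes place entirely inside the even $sl_2$-triple $\{e_\theta,h_\theta,e_{-\theta}\}$ and so transfers to $osp(1|2n)$ is a sensible justification that the paper leaves implicit, but it is the same argument.
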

Furthermore, similar to the proof of \cite[Theorem 4.2 (1)]{DLWY}, we have:

\begin{prop}\label{Conj:ideal-generator}
 The maximal ideal $\tI$ of $N(\g,k)$ is generated by $e_{-\theta}(0)^{k+1}e_{\theta}(-1)^{k+1}\1$.

\end{prop}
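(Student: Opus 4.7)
Write $v := e_{-\theta}(0)^{k+1} e_\theta(-1)^{k+1}\1$ and let $\tI'$ denote the $N(\g,k)$-ideal generated by $v$; Lemma \ref{l1} gives $\tI' \subseteq \tI$, so only the reverse inclusion is at issue. I would proceed in two stages: first lift to the ambient vertex superalgebra by showing that the $V(k,0)$-ideal generated by $v$ coincides with $\J$, and then descend back to $N(\g,k)$ using the Heisenberg tensor decomposition $V(k,0)(0) \cong M_{\wh}(k,0)\otimes N(\g,k)$.

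For the lift, the zero modes $e_\theta(0),\ h_\theta(0),\ e_{-\theta}(0)$ form an $\mathfrak{sl}_2$-triple acting on $V(k,0)$ (the affine central term vanishes at $m+n=0$, and $\<\theta,\theta\>=2$). Since $[e_\theta,e_\theta]=0$, the vector $e_\theta(-1)^{k+1}\1$ is killed by $e_\theta(0)$ and carries $h_\theta(0)$-weight $2(k+1)$; it is therefore an $\mathfrak{sl}_2$-highest-weight vector of weight $2(k+1)$, and the standard $\mathfrak{sl}_2$ identity yields
\[
e_\theta(0)^{k+1} v \;=\; e_\theta(0)^{k+1} e_{-\theta}(0)^{k+1} e_\theta(-1)^{k+1}\1 \;=\; (2k+2)!\, e_\theta(-1)^{k+1}\1.
\]
This nonzero multiple of $e_\theta(-1)^{k+1}\1$ places the latter in the $V(k,0)$-ideal generated by $v$, and since $\J$ is generated as a $V(k,0)$-ideal by $e_\theta(-1)^{k+1}\1$, the two $V(k,0)$-ideals coincide with $\J$.

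For the descent, take $y \in \tI$; by the previous step, $y = \sum_i a^{(i)}_{n_i} v$ with $a^{(i)} \in V(k,0)$ and $n_i \in \Z$. Decomposing each $a^{(i)}$ by the root-lattice grading $V(k,0) = \oplus_\mu V(k,0)(\mu)$ and using that $v \in V(k,0)(0)$, each summand $(a^{(i)}_\mu)_{n_i} v$ lies in $V(k,0)(\mu)$. Since $y \in V(k,0)(0)$, only the $\mu=0$ parts contribute, so I may assume $a^{(i)} \in V(k,0)(0)$. Under $V(k,0)(0) \cong M_{\wh}(k,0) \otimes N(\g,k)$, the vector $v$ corresponds to $\1 \otimes v$, and the tensor-VOA product formula applied to a pure tensor $a = b \otimes c$ gives
\[
a_n(\1 \otimes v) \;=\; \sum_{m \geq 0}\, \frac{L_{-1}^m b}{m!} \otimes c_{n+m} v,
\]
where the creation axiom $b_i\1=0$ for $i \geq 0$ truncates the sum and each $c_{n+m} v$ lies in $\tI'$. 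Linearity gives $a_n v \in M_{\wh}(k,0) \otimes \tI'$ for every $a \in V(k,0)(0)$, hence $y \in M_{\wh}(k,0) \otimes \tI'$. Because $y \in N(\g,k)$ is $L_0^{\h}$-homogeneous of weight zero and the weight-zero subspace of $M_{\wh}(k,0)$ under $L_0^{\h}$ is $\C\1$, we conclude $y \in \C\1 \otimes \tI' = \tI'$, as required.

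The main obstacle is the descent stage: an expression for $y \in \tI$ in terms of $V(k,0)$-modes on $v$ need not restrict formally to $N(\g,k)$-modes, and one must leverage both the root grading (to reduce to the commuting sector $V(k,0)(0)$) and the tensor-VOA structure together with the creation axiom (to keep the output inside $M_{\wh}(k,0)\otimes \tI'$) before extracting the $N(\g,k)$-content via the Heisenberg weight grading. The lift stage, by contrast, is a clean $\mathfrak{sl}_2$ computation.
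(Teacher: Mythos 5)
Your proof is correct and is essentially the argument the paper intends: the paper's "proof" simply defers to \cite[Theorem 4.2(1)]{DLWY}, whose strategy is exactly your two-stage scheme --- use the horizontal $\mathfrak{sl}_2$-triple $e_\theta(0),h_\theta(0),e_{-\theta}(0)$ to show that $e_{-\theta}(0)^{k+1}e_\theta(-1)^{k+1}\1$ generates the same $V(k,0)$-ideal $\J$ as $e_\theta(-1)^{k+1}\1$, and then descend through $V(k,0)(0)\cong M_{\wh}(k,0)\otimes N(\g,k)$ to identify $\J\cap N(\g,k)$ with the $N(\g,k)$-ideal generated by that vector. Your constant $(2k+2)!$ and the extraction of the weight-zero Heisenberg component at the end are both right.
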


For $\alpha\in \Delta_{0(+)}^{L}$, we let $\widehat{P}_{\alpha}$ be the vertex
operator subalgebra of $N(\g,k)$ generated by $\omega_{\alpha}$, $\bar{\omega}_{\alpha}$, $W_{\alpha}^3$, $\bar{W}_{\alpha}^3$ and let $P_{\alpha}$ be the vertex
operator subalgebra of $K(\g,k)$ generated by $\omega_{\alpha}$, $\bar{\omega}_{\alpha}$, $W_{\alpha}^3$, $\bar{W}_{\alpha}^3$.  Then $P_\alpha$ is a quotient of
$\widehat{P}_{\alpha}.$ For $\alpha\in \Delta_{0(+)}^{S}$, we let $\widehat{P}^{'}_{\alpha}$ be the vertex
operator subalgebra of $N(\g,k)$ generated by $\omega_{\alpha}$, $W_{\alpha}^3$ and let $P^{'}_{\alpha}$ be the vertex
operator subalgebra of $K(\g,k)$ generated by $\omega_{\alpha}$, $W_{\alpha}^3$. Then $P^{'}_\alpha$ is a quotient of
$\widehat{P}^{'}_{\alpha}.$ Next we prove that
both $P_\alpha$ and $P^{'}_\alpha$ are simple vertex operator algebras. We have:

%For this purpose,
%we recall our discussion earlier on the automorphisms of vertex
%operator algebra $V(k,0)$ and $N(\g,k).$ That is, any Weyl group
%element gives an automorphism of $V(k,0)$ and $N(\g,k).$

%Clearly, any automorphism $\sigma$ of $V(k,0)$ induces an
%automorphism of $L(k,0)$ as $\sigma$ maps the unique maximal ideal
%$\J$ to $\J.$ If $\sigma\in W(\g)$, then $\sigma$ preserves the
%unique maximal ideal $\tI$ and $\sigma$ gives an automorphism of
%the parafermion vertex operator algebra $K(\g,k),$ Now let
%$\alpha\in \Delta_+$ be a long root. Then there exists $\sigma\in
%W(\g)$ such that $\sigma \theta=\alpha$ \cite{Hu}.  As a result,
%$$\sigma(x_{-\theta}(0)^{k+1}x_{\theta}(-1)^{k+1}\1)=
%ax_{-\a}(0)^{k+1}x_{\a}(-1)^{k+1}\1$$
%for some constant $a.$
%This implies from Lemma \ref{l1} that
%$x_{-\a}(0)^{k+1}x_{\a}(-1)^{k+1}\1\in \tI.$ Using \cite[Theorem 4.2]{DLWY}

\begin{prop}\label{plast} For any $\alpha\in \Delta_{0(+)}^{L},$ the vertex operator
subalgebra $P_{\alpha}$ of $K(\g,k)$ is a simple vertex operator algebra
isomorphic to  the parafermion vertex operator algebra $K(osp(1|2),k)$. Let $\alpha\in \Delta_{0(+)}^{S}.$ Then the vertex operator subalgebra $P^{'}_{\alpha}$
of $K(\g,k)$ is a simple vertex operator algebra isomorphic to
the parafermion vertex operator algebra $K(sl_2,2k)$.
\end{prop}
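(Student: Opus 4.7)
The plan is to construct, for each $\alpha\in\Delta_{0(+)}^L$, a surjective VOA homomorphism $K(osp(1|2),k)\twoheadrightarrow P_\alpha$ and then upgrade it to an isomorphism via simplicity of the domain; the short-root case is entirely parallel with $K(sl_2,2k)$ in place of $K(osp(1|2),k)$, so I will describe the long-root case in detail.

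First I would identify $\widehat{P}_\alpha$ with $N(osp(1|2),k)$. The subspace $\bar{\g}^\alpha\subset\g$ is a Lie sub-superalgebra isomorphic to $osp(1|2)$, and $\langle\,,\,\rangle$ restricts to the standard invariant form on it ($\langle h_\alpha,h_\alpha\rangle=2$, matching the normalization of $osp(1|2)$ since $\alpha$ is long). A PBW argument lifts the affine embedding $\widehat{\bar{\g}^\alpha}\hookrightarrow\widehat{\g}$ to an injective VOSA map $V_{\widehat{osp(1|2)}}(k,0)\hookrightarrow V(k,0)$ preserving the level; the intersection of its image with $N(\g,k)$ is precisely the commutant of the rank-one Heisenberg $M_{\widehat{\C h_\alpha}}(k,0)$ inside that image, hence a copy of $N(osp(1|2),k)$. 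Applying Theorem \ref{generator2} to $\g=osp(1|2)$ (the case $n=1$, where $\Delta_{0(+)}^L$ is a singleton and $\Delta_{0(+)}^S=\emptyset$), the universal parafermion $N(osp(1|2),k)$ is generated by the four vectors $\omega,\bar\omega,W^3,\bar W^3$; the embedding carries these precisely to $\omega_\alpha,\bar\omega_\alpha,W_\alpha^3,\bar W_\alpha^3$, so $\widehat{P}_\alpha\cong N(osp(1|2),k)$.

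Next I would descend to simple quotients. Composing with $N(\g,k)\twoheadrightarrow K(\g,k)$ gives a homomorphism $\pi:N(osp(1|2),k)\to P_\alpha$. By Proposition \ref{Conj:ideal-generator} applied to $osp(1|2)$, the maximal ideal of $N(osp(1|2),k)$ is generated by $v:=e_{-\alpha}(0)^{k+1}e_\alpha(-1)^{k+1}\1$. By the integrability of $L(k,0)$ as a level-$k$ $\widehat{\g}$-module (\cite{GS,AL,CL}), $e_\alpha(-1)^{k+1}\1\in\J$ for every long positive root $\alpha$, hence $v\in\J$; since $v$ has weight zero, $v\in\J\cap N(\g,k)=\tI$, so $\pi(v)=0$ in $P_\alpha$. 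Thus $\pi$ factors through a surjection $\bar\pi:K(osp(1|2),k)\twoheadrightarrow P_\alpha$. Since $K(osp(1|2),k)$ is simple and $\bar\pi(\1)=\1\neq 0$ in $P_\alpha$, the kernel of $\bar\pi$ is zero, so $\bar\pi$ is an isomorphism.

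The short-root case runs identically: $\g^\beta\cong sl_2$ with restricted form $\langle h_\beta,h_\beta\rangle=4$ giving affine level $k_\beta=2k$; Theorem~2.1 of \cite{DLWY} yields generators $\omega_\beta,W_\beta^3$ of $N(sl_2,2k)$; the $sl_2$-analog of Proposition \ref{Conj:ideal-generator} (cf.\ \cite{DLWY}) places $e_{-\beta}(0)^{2k+1}e_\beta(-1)^{2k+1}\1$ in $\tI$ by integrability at level $2k$; and simplicity of $K(sl_2,2k)$ gives $P'_\beta\cong K(sl_2,2k)$ by the same argument. The main obstacle I anticipate is the clean verification that $e_\alpha(-1)^{k+1}\1\in\J$ (and $e_\beta(-1)^{2k+1}\1\in\J$) for every long (resp.\ short) positive root, not just for $\theta$; this needs a careful appeal to the integrability results of \cite{GS,AL,CL}, since $\J$ is a priori presented only by the single generator at $\theta$.
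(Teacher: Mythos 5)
Your proposal is correct and follows essentially the same route as the paper: the decisive step in both is to show $e_{-\alpha}(0)^{k_\alpha+1}e_{\alpha}(-1)^{k_\alpha+1}\1\in\tI$ by observing that $e_\alpha(-1)$ acts locally nilpotently on $L(k,0)$, so the subalgebra generated by $\g^\alpha$ is the integrable module $L(k_\alpha,0)$ and hence $e_\alpha(-1)^{k_\alpha+1}\1\in\J$ for every positive even root, not only $\theta$. The only difference is presentational: the reduction you carry out by hand (identifying $\widehat{P}_\alpha$ with $N(osp(1|2),k)$ via Theorem \ref{generator2} and factoring through the maximal ideal of Proposition \ref{Conj:ideal-generator}) is delegated in the paper to \cite[Theorem 4.2]{DLWY}, so your write-up is a legitimate, slightly more self-contained version of the same argument.
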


%We next deal with short roots $\a\in \Delta_{0(+)}.$ For $\alpha\in \Delta_{0(+)}^{S}$, we let $P^{'}_{\alpha}$ be the vertex
%operator subalgebra of $K(\g,k)$ generated by $\omega_{\alpha}$, $W_{\alpha}^3$. We have:

\begin{proof} By \cite[Theorem 4.2]{DLWY}, we only
need to prove that
$$e_{-\a}(0)^{k_\a+1}e_{\a}(-1)^{k_\a+1}\1\in \tI,$$
where $k_\a=k$ if $\a\in \Delta_{0(+)}^{L}$ and $k_\a=2k$ if $\a\in \Delta_{0(+)}^{S}$.
Since $e_{\a}(-1)$ is locally nilpotent on $L(k,0),$ $L(k,0)$ is an integrable module for $\widehat{\g^\a}=\g^\a \otimes \C[t,t^{-1}] \oplus \C K$,
where $\g^{\a}=\C e_{\a}+\C h_{\alpha}+\C e_{-\alpha}$ is isomorphic
to $sl_2$. In particular, the
vertex operator subalgebra $U$ of $L(k,0)$ generated by $\g^\a$ is
an integrable highest weight module of $\widehat{\g^\a}$. That is, $U$ is isomorphic to
$L(k_\a,0)$ associated to the affine algebra $\widehat{\g^\a}.$ Thus we have $e_{\a}(-1)^{k_\a+1}\1\in \J.$ It follows that $e_{-\a}(0)^{k_\a+1}e_{\a}(-1)^{k_\a+1}\1\in
\tI$.
\end{proof}

\begin{rmk} We see that the building blocks of parafermion vertex operator algebras $K(osp(1|2n),k)$ are $K(osp(1|2),k)$ and $K(sl_2,2k)$. The structural and
representation theory for $K(sl_2,k)$ are studied in \cite{DLWY}, \cite{DW2}, \cite{ALY}, \cite{JW1}, \cite{JW2} etc., and the
representation theory for $K(osp(1|2),k)$ are studied in \cite{CFK}. These may shed light on the study of the
representation theory for rational parafermion vertex operator algebras $K(osp(1|2n),k)$.
\end{rmk}

\end{document}